\newcommand\characx{\mathbf{x}}
\newtheorem{theorem}{Theorem}
\newtheorem{remark}{Remark}
\newtheorem{example}{Example}
\newcommand\bremark{\begin{remark}\begin{upshape}}
\newcommand\eremark{\end{upshape}\end{remark}}
\newtheorem{proposition}{Proposition}
\newtheorem{lemma}{Lemma}
\newtheorem{definition}{Definition}
\DeclareMathOperator{\wt}{wt}
\DeclareMathOperator{\de}{det}
\DeclareMathOperator{\h}{ht}
\DeclareMathOperator{\HVT}{HVT}
\DeclareMathOperator{\SVT}{SVT}
\DeclareMathOperator{\SSYT}{SSYT}
\title[A note on canonical stable Grothendieck functions]{A note on stable canonical Grothendieck functions}
\author[]{Siddheswar Kundu}
\address{School of Mathematical Sciences, National Institute of Science Education and Research, Bhubaneswar, HBNI, P.O. Jatni, Khurda, Odisha, 752050, India.}
\email{kundusidhu96@gmail.com}
\keywords{canonical stable Grothendieck functions, hook-valued tableau, Murnaghan-Nakayama type rule} 
\subjclass[]{05E05}
\begin{document}
\begin{abstract}
In this article, we offer a new way to prove the Murnaghan-Nakayama type rule for the stable Grothendieck polynomials, originally established by Nguyen-Hiep-Son-Thuy. Additionally, we establish a Murnaghan-Nakayama type rule for cannoical stable Grothendieck functions.  
\end{abstract}
\maketitle
\section{Introduction}
The concept of Grothendieck polynomials, which serve as K-theoretic analogues of Schubert polynomials, was introduced by Lascoux and Sch\"utzenberger~\cite{Lascoux:G1}. Later, Fomin and Kirillov~\cite{Fomin:Yang-Baxter} defined a parameterized version, the $\beta$-Grothendieck polynomials, and investigated their stable limits. The stable Grothendieck polynomials $G^{(\beta)}_{\lambda}$, which are indexed by partitions, serve as the K-theoretic analogs of Schur polynomials $s_{\lambda}$ and form a basis for (a completion of) the symmetric function space. Yeliussizov~\cite{Yeliussizov:canonical} further extended this family to a two-parameter version, calling them canonical stable Grothendieck functions $G^{(\alpha,\beta)}_{\lambda}$.

Schur polynomials play a significant role in the representation theory of general linear groups and symmetric groups. They are the characters of finite-dimensional irreducible polynomial representations of general linear groups. Furthermore, Schur polynomials form a crucial basis for the algebra of symmetric functions, alongside other sets like the power sum symmetric functions. The classical Murnaghan–Nakayama rule~\ref{MN:classical} provides the formula for expanding the product of a Schur function $s_{\lambda}$ with a power sum symmetric function $p_k$ as a linear combination of Schur functions. 
Murnaghan–Nakayama rules exist for various other mathematical settings. For example,
\begin{itemize}
    \item A plethystic version is detailed in \cite{MN:Plethysm}.
    \item A rule for non-commutative Schur functions can be found in \cite{MN:non-comu}.
    \item In \cite{MN:k-Schur}, a Murnaghan-Nakayama type rule for k-Schur functions is presented. 
\end{itemize}

The purpose of this note is to present another proof (see \S\ref{MN:stable}) of a Murnaghan–Nakayama type rule for the Grothendieck polynomials of Grassmannian type, first stated in \cite{Murnaghan-Nakayama:stable}. Our proof strategy is directly inspired by the structure of the classical Murnaghan-Nakayama rule's proof, detailed in \cite[Theorem 7.17.1]{Stanley:vol.2}. We also produce a Murnaghan-Nakayama type rule for canonical Grothendieck functions in \S\ref{MN:canonical}.  
\section{Preliminaries}
Let $n$ be a positive integer and $\mathbb{Z}_{\geq 0}=\{0,1,2,\dots \}$. We use $\mathcal{P}[n]$ to represent the set of partitions with at most $n$ non-zero parts, i.e., the set consists of integer sequences $\lambda=(\lambda_1 \geq  \dots \geq \lambda_n \geq 0)$. We define $|\lambda|:= \lambda_1 + \cdots +\lambda_n$. A partition $\lambda$ is said to be a \emph{hook}, if $\lambda=(p+1, 1^{q})$ for some $p,q \in \mathbb{Z}_{\geq 0},$ where $p,q$ are said to be its \emph{arm, leg} respectively. We can visually represent a partition $\lambda$ using its Young diagram $Y(\lambda)$, which is a collection of boxes that are top and left justified and the $i^{th}$ row contains $\lambda_i$ boxes. For partitions $\nu$ and $\lambda$, such that $ \lambda_i \leq \nu_i \forall i \geq 1$, the skew shape $\nu/\lambda$ is formed by taking the set theoretic difference $Y(\nu)-Y(\lambda)$ and its number of rows, columns are denoted by $r(\nu/\lambda),$ $ c(\nu/\lambda)$ respectively. The skew shape $(5,3,1)/(2,1)$ is shown below.
\begin{center}
  \ytableausetup{nosmalltableaux,boxsize=0.6 cm}
\begin{ytableau} 
\none & \none & \null & \null & \null \\
\none & \null & \null \\
\null \\
\end{ytableau}  
\end{center}
We call two boxes in a skew shape \emph{adjacent} if they share an edge. A skew shape is said to be \emph{connected} if every pair of boxes within the shape is connected by a sequence of adjacent boxes contained in the shape. A \emph{ribbon} is a special type of connected skew shape, which is defined by the absence of any $2 \times 2$ square. Let $\mathcal{R}[t]$ denote the set of all ribbons with $t$ boxes. The \emph{height} $\h(\nu/\lambda)$ of a ribbon $\nu/\lambda$ equals the number of non-empty rows minus one. The maximal ribbon along the northwest border of a connected skew Young diagram $\nu/\lambda$ is the largest possible ribbon $\mu/\lambda$ lying within $\nu/\lambda$.
\begin{definition}\cite[\S4]{Yeliussizov:canonical}
    A hook-valued tableau of shape $\lambda$ is a filling of the Young diagram $Y(\lambda)$ subject to the following conditions
    \begin{itemize}
        \item Each box is filled with a semistandard Young tableau having a hook shape, namely of the form
              \ytableausetup{nosmalltableaux,boxsize=0.6 cm}
\begin{ytableau} 
    \none [h] & \none[a_1] & \none[\cdots] & \none[a_r]  \\
    \none[b_1]  \\
    \none[\vdots]  \\
    \none[b_l]  \\
\end{ytableau}, where $h \leq a_1 \leq \cdots \leq a_r, h<b_1<\cdots<b_t$.
        \item Each row is weakly increasing from left to right and each column is strictly increasing from top to bottom according to the order on semi-standard Young tableaux defined by: $$T_1 \leq T_2 \text{  if } \max(T_1) \leq \min(T_2), \text{ and } T_1 < T_2 \text{ if } \max(T_1) < \min(T_2),$$ for any two tableaux $T_1,T_2$, where $\max(T)$ and $ \min(T)$ are, respectively, the maximum and minimum entries of the tableau $T$.
    \end{itemize}
The \emph{weight} of a hook-valued tableau $T$, denoted by $\wt(T)$, is the sequence $(t_1,t_2,\dots),$ where $t_i$ counts the number of $i'$s in $T$. We write $a(T)$ $(\text{resp. } b(T))$ to denote the sum of the arm lengths (resp. legs lengths) of all hooks in $T$.
\end{definition}
\begin{example}
The tableau below is a hook-valued tableau of shape $(3,2)$ with $\wt(T)=(2,2,2,4,4,2),$ $a(T)=6,$ and $ b(T)=5$.
\[
{\def\mc#1{\makecell[lb]{#1}}
T =
{\begin{array}[lb]{*{3}{|l}|}\cline{1-3}
\mc{112\\2\\3}&\mc{34\\\,\\\,}&\mc{44\\5\\\,}\\\cline{1-3}
\mc{4\\5}&\mc{556\\6}\\\cline{1-2}
\end{array}
}
}
\]
\end{example}
\begin{definition}
\begin{upshape}
 \cite[Definition 3.1]{Yeliussizov:canonical}   
\end{upshape}
Let $\lambda \in \mathcal{P}[n]$ and $X^n =(x_1,x_2,\dots,x_n)$ be commuting indeterminates. Then we define the \emph{canonical stable Grothendieck function} $G_{\lambda}^{(\alpha,\beta)}(X^n)$ by the formula below
    $$ G_{\lambda}^{(\alpha,\beta)}(X^n):= \frac{\de \Bigg [\frac{x_i^{\lambda_j+n-j} 
    (1+\beta x_i)^{j-1}}{(1-\alpha x_i)^{\lambda_j}}\Bigg]_{1 \leq i,j \leq n}}{\displaystyle\prod_{1 \leq i< j \leq n}(x_i-x_j) } $$
    Combinatorially, $G_{\lambda}^{(\alpha,\beta)}(X^n)=\displaystyle\sum_{T \in \HVT_n(\lambda)} \alpha^{a(T)} \beta^{b(T)} \characx^{\wt(T)}, $ where $\HVT_n(\lambda)$ denotes the set of all hook-valued tableaux $T$ of shape $\lambda$ such that the entries in $T$ are $ \leq n$.
\end{definition}
\textbf{Specializations:}
\begin{itemize}
    \item $G_{\lambda}^{(0,\beta)}(X^n)$ coincides with the stable Grothendieck polynomial $G_{\lambda}^{\beta}(X^n),$ which has the following combinatorial interpretation 
    $$ G_{\lambda}^{\beta}(x_1,x_2,\dots,x_n)=\displaystyle\sum_{T \in \SVT_n{(\lambda)}} \beta^{|T|-|\lambda|} \characx^{\wt(T)},$$ where $\SVT_n{(\lambda)} =\{ T \in \HVT_n(\lambda): a(T)=0 \}$ and $|T|$ is the total number of entries in $T$. Elements of $\SVT_n(\lambda)$ are known as set-valued tableaux of shape $\lambda$~\cite[\S3]{Buch:K-LR}.  
    \item $G_{\lambda}^{(0,0)}(x_1,x_2,\dots,x_n)$ is equal to the Schur polynomial $s_{\lambda}(x_1,x_2,\dots,x_n),$ which has a combinatorial characterization 
    $$s_{\lambda}(x_1,x_2,\dots,x_n)=\displaystyle\sum_{T \in \SSYT_n(\lambda)}\characx^{\wt(T)},$$ where $\SSYT_n(\lambda)=\{ T \in \HVT_n(\lambda): a(T)=b(T)=0 \}$. In the literature, elements of $\SSYT_n(\lambda)$ are referred to as semi-standard Young tableaux of shape $\lambda$. 
\end{itemize}
\bremark
\label{remark:main}
\begin{upshape}
    \cite[Proposition 3.4]{Yeliussizov:canonical}
\end{upshape}
$G_{\lambda}^{(\alpha,\beta)}(x_1,x_2,\dots,x_n)= G_{\lambda}^{(0,\alpha + \beta)}(\frac{x_1}{1-\alpha x_1}, \frac{x_2}{1-\alpha x_2},\dots,\frac{x_n}{1-\alpha x_n})$.
\eremark
The $r^{th}$ \emph{power sum symmetric function} $p_r(X^n)$ is defined as follows:
$$ p_r(X^n): = \displaystyle\sum_{j=1}^{n} x_j^{r} \text{ for } r \geq 1; p_0(X^n)=1 $$
\section{Murnaghan-Nakayama type rules}
\label{Sec:3}
\subsection{Murnaghan-Nakayama rule for Schur polynomials}
\label{MN:classical}
Let $k \in \mathbb{N}, \lambda \in \mathcal{P}[n]$. Then the classical Murnaghan-Nakayama rule \cite[Theorem 7.17.1]{Stanley:vol.2} states
$$p_k(X^n)s_{\lambda} (X^n)=\displaystyle\sum_{\nu : \nu/\lambda \in \mathcal{R}[k]}(-1)^{\h(\nu/\lambda)}s_{\nu}(X^n)$$
\begin{example}
Consider $\lambda=(2,1) \in \mathcal{P}[5], k=3$. We display below the partitions arising in the expansion of $p_3 s_{(2,1)}$, with the ribbons highlighted in yellow. 
\begin{center}
  \ytableausetup{nosmalltableaux,boxsize=0.6 cm}
\begin{ytableau} 
\null & \null & *(yellow) \null & *(yellow) \null & *(yellow) \null \\
\null \\
\end{ytableau} 
\hspace{0.2cm}
\begin{ytableau} 
\null & \null & *(yellow) \null \\
\null & *(yellow) \null & *(yellow) \null\\
\end{ytableau}
\hspace{0.2cm}
\begin{ytableau} 
\null & \null  \\
\null & *(yellow) \null \\
*(yellow) \null & *(yellow) \null
\end{ytableau} 
\hspace{0.2cm}
\begin{ytableau} 
\null & \null  \\
\null \\
*(yellow) \null \\
*(yellow) \null \\
*(yellow) \null
\end{ytableau} 
\end{center}
Thus we have $$ p_3 s_{(2,1)}=s_{(5,1)} -s_{(3,3)}-s_{(2,2,2)} + s_{(2,1,1,1,1)}$$ 
\end{example}
\subsection{Murnaghan-Nakayama type rule for stable Grothendieck polynomials}
\label{MN:stable}
The theorem below provides a type Murnaghan-Nakayama rule for the stable Grothendieck polynomials.
\begin{theorem}
\begin{upshape}
 \cite[Theorem 1.1]{Murnaghan-Nakayama:stable}   
\end{upshape}
\label{theorem:stable}
 Given $\lambda \in \mathcal{P}[n]$ and $k \in \mathbb{N},$ we have 
 $$ p_k(X^n) G_{\lambda}^{\beta}(X^n)= \displaystyle\sum_{\nu}(-\beta)^{|\nu/\lambda|-k} (-1)^{k-c(\nu/\lambda)} \binom{r(\nu/\lambda)-1}{k-c(\nu/\lambda)} G_{\nu}^{\beta}(X^n),$$
 where the sum runs over all partitions $\nu \in \mathcal{P}[n]$ such that $\lambda \subseteq \nu,$ $c(\nu/\lambda) \leq k,$ $\nu/\lambda$ is connected and the maximal ribbon along its northwest border has size at least $k$.
\end{theorem}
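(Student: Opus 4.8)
The plan is to mirror Stanley's proof of the classical Murnaghan--Nakayama rule, replacing semistandard tableaux with set-valued tableaux throughout. Recall that the classical argument works by expressing $p_k(X^n) s_\lambda(X^n)$ via the Jacobi--Trudi (bialternant) formula: one writes $s_\lambda(X^n) = a_{\lambda+\delta}/a_\delta$ where $\delta=(n-1,n-2,\dots,0)$ and $a_\mu = \det[x_i^{\mu_j}]$, and then observes that $p_k \cdot a_{\lambda+\delta} = \sum_{j} a_{\lambda+\delta+k\epsilon_j}$, where $\epsilon_j$ is the $j$-th standard basis vector; after sorting the exponent vectors back into strictly decreasing order one picks up a sign $(-1)^{\h(\nu/\lambda)}$ exactly when $\lambda+\delta+k\epsilon_j$ rearranges to $\nu+\delta$, and the combinatorics of which $j$ contribute is precisely the ribbon condition. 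For the $\beta$-deformation I would use the determinantal formula for $G_\lambda^{\beta}=G_\lambda^{(0,\beta)}$ obtained by specializing the definition in the excerpt: $G_\lambda^{\beta}(X^n)$ is a ratio of a determinant $\det\!\bigl[x_i^{\lambda_j+n-j}(1+\beta x_i)^{j-1}\bigr]$ by the Vandermonde $\prod_{i<j}(x_i-x_j)$.

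The key steps, in order, would be the following. First, establish the analogue of $p_k \cdot a_{\lambda+\delta}$: multiply the numerator determinant of $G_\lambda^\beta$ by $p_k(X^n)=\sum_i x_i^k$ and expand by multilinearity of the determinant along rows, so that $p_k$ times the numerator becomes a sum of determinants in which a single column has its exponent bumped by $k$. Second --- and this is the deformation-specific wrinkle --- because the column entries carry the extra factor $(1+\beta x_i)^{j-1}$ rather than being pure monomials, after bumping one column's $x$-exponent by $k$ one must re-expand $x_i^{\lambda_j+n-j+k}$ in terms of the ``shifted'' basis elements $x_i^{m}(1+\beta x_i)^{\ell-1}$ that appear in other admissible numerator determinants; this is where the binomial coefficient $\binom{r(\nu/\lambda)-1}{k-c(\nu/\lambda)}$ and the power $(-\beta)^{|\nu/\lambda|-k}$ will emerge, since $x^{a} = \sum_{t\ge 0}\binom{?}{t}(-\beta)^t x^{a-t}(1+\beta x)^{?}$-type identities convert a monomial bump into a spread of lower-degree terms across several columns. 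Third, match the resulting index sets: a bumped-and-resorted exponent configuration corresponds to a partition $\nu\supseteq\lambda$ with $\nu/\lambda$ connected whose northwest maximal ribbon has size $\ge k$, the column count $c(\nu/\lambda)$ records how much of the bump was ``horizontal,'' the row count $r(\nu/\lambda)$ controls the binomial coefficient, and the sign $(-1)^{k-c(\nu/\lambda)}$ is the residue of the sorting sign $(-1)^{\h}$ after the $\beta$-corrections. Fourth, collect terms and divide by the Vandermonde to recover $G_\nu^\beta$.

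An alternative, possibly cleaner route is to leverage Remark~\ref{remark:main}'s substitution $x_i \mapsto x_i/(1-\alpha x_i)$ only in the degenerate direction: since $G_\lambda^\beta = G_\lambda^{(0,\beta)}$ is already in the base case, I can instead try to deduce the rule from the classical Schur rule together with the known expansion of $G_\lambda^\beta$ into Schur functions (via set-valued tableaux), but the resorting of signs there is less transparent, so I would keep the bialternant computation as the main line and use the tableau interpretation only to sanity-check small cases such as the one displayed after the classical rule.

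The main obstacle I anticipate is step two: controlling exactly how a single monomial bump $x_i^{\lambda_j+n-j}\mapsto x_i^{\lambda_j+n-j+k}$, once re-expressed in the mixed basis $\{x^m(1+\beta x)^{\ell-1}\}$ and redistributed over columns to form a valid numerator determinant for some $\nu$, produces precisely the stated multiplicity $\binom{r(\nu/\lambda)-1}{k-c(\nu/\lambda)}$ rather than some other alternating sum of binomials. I expect this to require a careful bookkeeping lemma --- essentially a $\beta$-deformed ``straightening'' identity for these quasi-Vandermonde determinants --- isolating which subsets of rows can absorb the excess degree $|\nu/\lambda|-k$ via the $(1+\beta x_i)$ factors; once that lemma is in place, the sign and the connectivity/maximal-ribbon constraints follow from the same sorting argument as in the classical case.
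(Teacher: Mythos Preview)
Your plan is essentially the paper's: it too writes $G_\lambda^\beta = A_{\lambda+\delta}^\beta / A_\delta$ with $A_\gamma^\beta=\det\bigl[x_i^{\gamma_j}(1+\beta x_i)^{j-1}\bigr]$, proves $p_k\, A_\gamma^\beta = \sum_{j} A_{\gamma+k\epsilon_j}^\beta$ (its Lemma~1 --- a one-line permutation-expansion argument, though the paper opts for induction on $n$), and then reduces everything to straightening each $A_{\lambda+\delta+k\epsilon_j}^\beta$ as a combination of legitimate $A_{\nu+\delta}^\beta$'s (its Proposition~1, also by induction on $n$ with an explicit $n=2$ base case). Your anticipated ``$\beta$-deformed straightening identity for these quasi-Vandermonde determinants'' is exactly that proposition.

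One caution about your steps two and three: the language of ``sorting'' and ``a bumped-and-resorted configuration corresponds to \emph{a} partition $\nu$'' imports the classical picture too literally. Because $A_\gamma^\beta$ is \emph{not} alternating in the coordinates of $\gamma$ (the $(1+\beta x_i)^{j-1}$ weights are tied to the column index $j$), a single bumped determinant $A_{\lambda+\delta+k\epsilon_j}^\beta$ does not straighten to $\pm A_{\nu+\delta}^\beta$ for one $\nu$; it expands as a genuine $\beta$-polynomial combination over many $\nu$'s. The paper's Proposition~1 pins these down: for fixed $j$ they are precisely the $\nu$ in the theorem whose skew shape $\nu/\lambda$ has its bottommost nonempty row in row $j$, each appearing with the full coefficient $(-\beta)^{|\nu/\lambda|-k}(-1)^{k-c(\nu/\lambda)}\binom{r(\nu/\lambda)-1}{k-c(\nu/\lambda)}$. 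Your proposed mechanism --- re-expanding a single entry $x_i^{a+k}$ in the family $\{x^m(1+\beta x)^{\ell-1}\}$ --- will not produce this directly, since that family is not a basis in one variable and there is no canonical redistribution ``across columns''. The mechanism the paper actually uses in the $n=2$ case is an iterated identity $D(p,q)=A^\beta_{(p,q)}-\beta\, D(p,q+1)$ relating the pure $2\times 2$ minors $D(p,q)=\det[x_i^{p},x_i^{q}]$ to the $A^\beta$'s, and the general case is bootstrapped by cofactor expansion along the last column. If you rework your step two along those lines rather than as a change of monomial basis, the binomial coefficient and the $(-\beta)$ power fall out cleanly.
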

\begin{example}
Consider $\lambda=(3,2,1) \in \mathcal{P}[3], k=3.$ Then all $\nu$ such that $G_{\nu}^{\beta}(X^3)$ occurs in the expansion of $p_3(X^3) G_{\lambda}^{\beta}(X^3)$ are shown below, with $\nu/\lambda$ highlighted in yellow.
\begin{center}
\ytableausetup{nosmalltableaux,boxsize=0.6 cm}
\begin{ytableau} 
\null & \null & \null & *(yellow) \null & *(yellow) \null & *(yellow) \null \\
\null & \null \\
\null \\
\end{ytableau}
\hspace{0.1cm}
\begin{ytableau} 
\null & \null & \null & *(yellow) \null \\
\null & \null & *(yellow) \null & *(yellow) \null\\
\null \\
\end{ytableau}
\hspace{0.1cm}
\begin{ytableau} 
\null & \null & \null & *(yellow) \null & *(yellow) \null\\
\null & \null & *(yellow) \null & *(yellow) \null\\
\null \\
\end{ytableau}
\hspace{0.1cm}
\begin{ytableau} 
\null & \null & \null & *(yellow) \null & *(yellow) \null\\
\null & \null & *(yellow) \null & *(yellow) \null  & *(yellow) \null\\
\null \\
\end{ytableau} 
\end{center}
\begin{center}
\ytableausetup{nosmalltableaux,boxsize=0.6 cm}
\begin{ytableau} 
\null & \null & \null \\
\null & \null & *(yellow) \null\\
\null & *(yellow) \null & *(yellow) \null\\
\end{ytableau}
\hspace{0.1cm}
\begin{ytableau} 
\null & \null & \null & *(yellow) \null  \\
\null & \null & *(yellow) \null & *(yellow) \null \\
\null & *(yellow) \null & *(yellow) \null\\
\end{ytableau}
\hspace{0.1cm}
\begin{ytableau} 
\null & \null & \null & *(yellow) \null  \\
\null & \null & *(yellow) \null & *(yellow) \null \\
\null & *(yellow) \null & *(yellow) \null & *(yellow) \null\\
\end{ytableau} 
\end{center}
Therefore, $p_3(X^3)G_{\lambda}^{\beta}(X^3)=G_{(6,2,1)}^{\beta}(X^3)-G_{(4,4,1)}^{\beta}(X^3)-\beta G_{(5,4,1)}^{\beta}(X^3) + \beta^2 G_{(5,5,1)}^{\beta}(X^3)- G_{(3,3,3)}^{\beta}(X^3) + \beta^2  G_{(4,4,3)}^{\beta}(X^3)-\beta^3 G_{(4,4,4)}^{\beta}(X^3)$.
\end{example}
\bremark
At $\beta=0,$ Theorem~\ref{MN:stable} coincides with the classical Murnaghan-nakayama rule.
\eremark
For $\gamma \in \mathbb{Z}_{\geq 0}^{n},$ define $A_{\gamma}^{\beta}(X^n):=\de\Big(x_i^{\gamma_j}(1+\beta x_i)^{j-1}\Big)_{1 \leq i,j \leq n}$.
\begin{lemma}
 For $\gamma \in \mathbb{Z}_{\geq 0}^{n},$  $p_r(X^n) A_{\gamma}^{\beta}(X^n)=\displaystyle\sum_{j=1}^{n}A_{\gamma+r\epsilon_j}^{\beta}(X^n),$ where $\epsilon_j \in \mathbb{Z}^{n}$ whose $j^{th}$ entry is $1$ and the others are $0$. 
\end{lemma}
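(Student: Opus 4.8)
The plan is to deduce the identity from a general fact about determinants: if $M=(m_{ij})_{1\le i,j\le n}$ is any square matrix over a commutative ring and $y_1,\dots,y_n$ are ring elements, then
$$\Bigl(\sum_{i=1}^{n}y_i\Bigr)\det(M)=\sum_{j=1}^{n}\det\bigl(M^{[j]}\bigr),$$
where $M^{[j]}$ denotes the matrix obtained from $M$ by replacing its $j$th column $(m_{ij})_{i}$ with $(y_i m_{ij})_{i}$ and leaving every other column unchanged. Granting this, the Lemma follows immediately: take $y_i=x_i^{r}$ and let $M$ be the matrix with $(i,j)$ entry $x_i^{\gamma_j}(1+\beta x_i)^{j-1}$, so that $\det(M)=A_\gamma^\beta(X^n)$ and $\sum_i y_i=p_r(X^n)$. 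Multiplying the $j$th column of $M$ entrywise by $(x_i^{r})_i$ replaces the exponent $\gamma_j$ by $\gamma_j+r$ in that column only, which is exactly the passage from $\gamma$ to $\gamma+r\epsilon_j$; hence $\det(M^{[j]})=A_{\gamma+r\epsilon_j}^\beta(X^n)$, and the general identity becomes the assertion of the Lemma.

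To prove the displayed determinant identity I would expand by the Leibniz formula, $\det(M)=\sum_{\sigma\in S_n}\operatorname{sgn}(\sigma)\prod_{j=1}^{n}m_{\sigma(j),j}$, so that
$$\Bigl(\sum_{i=1}^{n}y_i\Bigr)\det(M)=\sum_{\sigma\in S_n}\operatorname{sgn}(\sigma)\sum_{i=1}^{n}y_i\prod_{j=1}^{n}m_{\sigma(j),j}.$$
For a fixed $\sigma$ write $i=\sigma(k)$ with $k=\sigma^{-1}(i)$; then $y_i\prod_{j}m_{\sigma(j),j}=\prod_{j}y_{\sigma(j)}^{\delta_{jk}}m_{\sigma(j),j}=\prod_j\bigl(M^{[k]}\bigr)_{\sigma(j),j}$, and as $i$ ranges over $\{1,\dots,n\}$ so does $k$. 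Interchanging the two finite summations yields
$$\Bigl(\sum_{i=1}^{n}y_i\Bigr)\det(M)=\sum_{k=1}^{n}\sum_{\sigma\in S_n}\operatorname{sgn}(\sigma)\prod_{j=1}^{n}\bigl(M^{[k]}\bigr)_{\sigma(j),j}=\sum_{k=1}^{n}\det\bigl(M^{[k]}\bigr),$$
which is exactly what was claimed. Equivalently, one may expand $\det(M^{[j]})$ along its $j$th column by multilinearity to get $\det(M^{[j]})=\sum_i y_i m_{ij}C_{ij}$ with $C_{ij}$ the $(i,j)$ cofactor of $M$, sum over $j$, and recognize $\sum_j m_{ij}C_{ij}=\det(M)$ from the Laplace expansion along the $i$th row; this gives the same conclusion.

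I do not expect a genuine obstacle here, since the content is purely the multilinearity of the determinant; the only point requiring care is the re-indexing $i\mapsto\sigma^{-1}(i)$, which encodes the fact that multiplication by $p_r(X^n)$ distributes over the \emph{columns} of the matrix defining $A_\gamma^\beta(X^n)$, one column (and one shifted exponent) at a time, rather than scaling a single row. A quick check of the cases $n=1$ and $n=2$ confirms the identity and fixes the signs.
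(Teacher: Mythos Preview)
Your argument is correct. The general determinant identity you state and prove---that multiplying $\det(M)$ by $\sum_i y_i$ equals the sum over columns of the determinants obtained by scaling the $j$th column entrywise by $(y_i)_i$---is exactly what is needed, and your specialization $y_i=x_i^{r}$, $m_{ij}=x_i^{\gamma_j}(1+\beta x_i)^{j-1}$ recovers the Lemma immediately. Both the Leibniz-formula derivation and the cofactor alternative are valid; the only step that deserves the care you give it is the re-indexing $i\leftrightarrow k=\sigma^{-1}(i)$, and you handle it cleanly.

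The paper's own proof is different in structure: it proceeds by induction on $n$, verifying $n=2$ by hand and then expanding $A_\gamma^\beta(X^k)$ along its last column, applying the inductive hypothesis to each $(k-1)\times(k-1)$ minor $A_{\bar\gamma}^\beta(X^k_{i^*})$, and reassembling. Your approach is more direct and more conceptual: it isolates the underlying linear-algebra fact (multilinearity of the determinant in its columns) and applies it once, with no induction and no case analysis. The paper's route, by contrast, is a concrete recursive computation that makes the mechanism visible row by row but at the cost of some bookkeeping. Either proof suffices; yours has the advantage of generality---the identity $(\sum_i y_i)\det M=\sum_j\det M^{[j]}$ would equally well prove the analogous lemma for any bialternant-type determinant, not just the particular $\beta$-deformation here.
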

\begin{proof}
    We prove this lemma by induction on $n$. We first check it for $n=2$.
    $$ p_r(x_1,x_2) A_{\gamma}^{\beta}(x_1,x_2) = (x_1^{r}+x_2^{r})\Big(x_1^{\gamma_1}x_2^{\gamma_2}(1+\beta x_2)-x_1^{\gamma_2}x_2^{\gamma_1}(1+\beta x_1)\Big ) $$
    $$=\Big( x_1^{\gamma_1 + r}x_2^{\gamma_2}(1+\beta x_2)-x_1^{\gamma_2+r}x_2^{\gamma_1}(1+\beta x_1)\Big) + \Big( x_1^{\gamma_1}x_2^{\gamma_2 +r }(1+\beta x_2)-x_1^{\gamma_2}x_2^{\gamma_1 + r}(1+\beta x_1) \Big)$$
    $=A_{(\gamma_1 +r, \gamma_2)}^{\beta}(x_1,x_2) + A_{(\gamma_1 , \gamma_2 +r)}^{\beta}(x_1,x_2)$
    
Let the lemma be true for $n=k-1 (k > 2)$ and $X^k_{i^{*}}=(x_1,\dots, x_{i-1}, x_{i+1}, \dots, x_k)$ for $1 \leq i \leq k$.
Then $$p_r (x_1,\dots,x_k) A_{\gamma}^{\beta} (x_1,\dots,x_k)=
\begin{vmatrix}
x_1^{\gamma_1} & x_1^{\gamma_2}(1+\beta x_1)& \cdots & x_1^{\gamma_{k-1}}(1+\beta x_1)^{k-2}  & x_1^{\gamma_k}(1+\beta x_1)^{k-1}\\
x_2^{\gamma_1} & x_2^{\gamma_2}(1+\beta x_2)& \cdots & x_2^{\gamma_{k-1}}(1+\beta x_2)^{k-2} & x_2^{\gamma_k}(1+\beta x_2)^{k-1}\\
\cdots & \cdots &\cdots &\cdots & \cdots\\
x_k^{\gamma_1} & x_k^{\gamma_2}(1+\beta x_k)& \cdots & x_k^{\gamma_{k-1}}(1+\beta x_k)^{k-2} & x_k^{\gamma_k}(1+\beta x_k)^{k-1}
\end{vmatrix}$$
$ = (x_1^{r} + x_2^{r} + \cdots + x_k^{r}) \Bigg( (-1)^{k+1} x_1^{\gamma_k}(1+\beta x_1)^{k-1} A_{\bar{\gamma}}^{\beta}(X^k_{1^{*}}) + (-1)^{k+2} x_2^{\gamma_k}(1+\beta x_2)^{k-1} A_{\bar{\gamma}}^{\beta}(X^k_{2^{*}}) + \cdots + (-1)^{2k} x_k^{\gamma_k}(1+\beta x_k)^{k-1} A_{\bar{\gamma}}^{\beta}(X^k_{k^{*}})  \Bigg) $ $\big(\bar{\gamma}=(\gamma_1,\gamma_2,\dots,\gamma_{k-1})\big)$ \\
$= (-1)^{k+1} x_1^{\gamma_k +r }(1+\beta x_1)^{k-1} A_{\bar{\gamma}}^{\beta}(X^k_{1^{*}}) + (-1)^{k+1} x_1^{\gamma_k}(1+\beta x_1)^{k-1} \displaystyle\sum_{j=1}^{k-1}A_{\bar{\gamma}+r \epsilon_{j}}^{\beta}(X^k_{1^{*}}) $\\
$ + (-1)^{k+2}x_2^{\gamma_k +r}(1+\beta x_2)^{k-1} A_{\bar{\gamma}}^{\beta}(X^k_{2^{*}}) + (-1)^{k+2} x_2^{\gamma_k}(1+\beta x_2)^{k-1} \displaystyle\sum_{j=1}^{k-1}A_{\bar{\gamma}+r \epsilon_{j}}^{\beta}(X^k_{2^{*}})$\\
$+\cdots + (-1)^{2k} x_k^{\gamma_k +r}(1+\beta x_k)^{k-1} A_{\bar{\gamma}}^{\beta}(X^k_{k^{*}}) +  (-1)^{2k} x_k^{\gamma_k}(1+\beta x_k)^{k-1} \displaystyle\sum_{j=1}^{k-1}A_{\bar{\gamma}+r \epsilon_{j}}^{\beta}(X^k_{k^{*}})$\\
$= \displaystyle\sum_{j=1}^{k} (-1)^{k+j} x_j^{\gamma_k }(1+\beta x_j)^{k-1} A_{\bar{\gamma} + r \epsilon_1}^{\beta}(X^k_{j^{*}}) + \cdots + \displaystyle\sum_{j=1}^{k}(-1)^{k+j} x_j^{\gamma_k }(1+\beta x_j)^{k-1} A_{\bar{\gamma} + r \epsilon_{k-1}}^{\beta}(X^k_{j^{*}}) \\ + \displaystyle\sum_{j=1}^{k}(-1)^{k+j} x_j^{\gamma_k +r }(1+\beta x_j)^{k-1} A_{\bar{\gamma}}^{\beta}(X^k_{j^{*}}) $ \\
$=A_{\gamma + r \epsilon_1}^{\beta}(X^k) + \cdots + A_{\gamma + r \epsilon_{k-1}}^{\beta}(X^k) + A_{\gamma + r \epsilon_k}^{\beta}(X^k) $
\end{proof}
Since $G^{\beta}_{\lambda}(X^n) = \frac{A_{\lambda + \delta^n}^{\beta}(X^n)}{A_{\delta^n}(X^n)},$ $p_k(X^n)G^{\beta}_{\lambda}(X^n)=\displaystyle\sum_{j=1}^{n} \frac{A_{\lambda +\delta^n + k \epsilon_{j}}^{\beta}(X^n)}{A_{\delta^n}(X^n)},$ where $\delta^n=(n-1,n-2,\dots,1,0)$ and $A_{\delta^n}(X^n) = \de \big(x_i^{n-j}\big)_{1 \leq i,j \leq n}$. Thus, to prove Theorem~\ref{theorem:stable}, it is enough to show the following proposition. 
\begin{proposition}
\label{Proposition:main}
For $\lambda \in \mathcal{P}[n],$ $1 \leq j \leq n ,$ $$A_{\lambda +\delta^n + k \epsilon_{j}}^{\beta}(X^n) = \displaystyle\sum_{\nu}(-\beta)^{|\nu/\lambda|-k} (-1)^{k-c(\nu/\lambda)} \binom{r(\nu/\lambda)-1}{k-c(\nu/\lambda)} A_{\nu +\delta^n}^{\beta}(X^n),$$ where the sum runs over all partitions $\nu \in \mathcal{P}[n]$ such that $\lambda \subseteq \nu,$ $c(\nu/\lambda) \leq k,$ $\nu/\lambda$ is connected and the maximal ribbon along its northwest border has size at least $\mathrm{k}$, together with the condition that the bottommost non-empty row of $Y(\nu/\lambda)$ lies in $j^{th}$ row of $Y(\nu)$.    
\end{proposition}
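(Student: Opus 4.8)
The plan is to deduce Proposition~\ref{Proposition:main} by induction on $n$, mirroring the inductive proof of Lemma~1. Write $\mu=\lambda+\delta^n$, so that $\mu$ is strictly decreasing with $\mu_n=\lambda_n\ge 0$, and recall that a vector in $\mathbb{Z}_{\geq 0}^n$ equals $\eta+\delta^n$ for a unique partition $\eta\in\mathcal{P}[n]$ precisely when it is strictly decreasing. Two elementary facts drive everything: (i) $A_\gamma^\beta$ is alternating in its columns and vanishes whenever two columns coincide, i.e.\ whenever two of the pairs $(\gamma_\ell,\ell-1)$ agree; and (ii) $x^a(1+\beta x)^b=x^a(1+\beta x)^{b-1}+\beta\,x^{a+1}(1+\beta x)^{b-1}$, so that inside a determinant a column carrying $(x^a,(1+\beta x)^b)$-data may be split as a column carrying $(x^a,(1+\beta x)^{b-1})$-data plus $\beta$ times a column carrying $(x^{a+1},(1+\beta x)^{b-1})$-data.

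\emph{Case $j<n$.} Expand $A_{\lambda+\delta^n+k\epsilon_j}^\beta(X^n)$ along its last column, whose entries $x_i^{\lambda_n}(1+\beta x_i)^{n-1}$ are unaffected by the bump since $j\ne n$. The $(i,n)$-cofactor is $A_{(\mu+k\epsilon_j)_1,\dots,(\mu+k\epsilon_j)_{n-1}}^\beta(X^n_{i^*})$, and one checks that $(\mu_1,\dots,\mu_{n-1})=\lambda'+\delta^{n-1}$ with $\lambda'=(\lambda_1+1,\dots,\lambda_{n-1}+1)\in\mathcal{P}[n-1]$, the bump still being $k\epsilon_j$ with $j\le n-1$. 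Applying the inductive hypothesis to $A_{\lambda'+\delta^{n-1}+k\epsilon_j}^\beta$, substituting back, and re-reading the result as a cofactor expansion along the last column, each $A_{\nu'+\delta^{n-1}}^\beta(X^n_{i^*})$ reassembles into $A_{\nu+\delta^n}^\beta(X^n)$ with $\nu=(\nu'_1-1,\dots,\nu'_{n-1}-1,\lambda_n)$. Since $\nu_\ell-\lambda_\ell=\nu'_\ell-\lambda'_\ell$ for $\ell\le n-1$ and row $n$ of $\nu/\lambda$ is empty, the skew shapes $\nu/\lambda$ and $\nu'/\lambda'$ have the same rows (in particular the same bottommost row $j$), the same number of columns, the same connectedness, the same northwest-border ribbon, and the same size; hence $\nu\mapsto\nu'$ is a weight-preserving bijection of the index sets, and the identity for $(\lambda,n,j)$ follows from the one for $(\lambda',n-1,j)$.

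\emph{Case $j=n$.} Here we must straighten $A_{(\mu_1,\dots,\mu_{n-1},\,\mu_n+k)}^\beta(X^n)$ directly. If $\mu_n+k<\mu_{n-1}$, the exponent vector is already strictly decreasing and equals $(\lambda+k\epsilon_n)+\delta^n$, producing the single term $\nu=\lambda+k\epsilon_n$ (a horizontal strip of $k$ boxes in row $n$) with coefficient $1$. Otherwise, we repeatedly apply (ii) to the last ``bad'' column to lower its $(1+\beta x)$-power, which creates a column whose data collides, in the $(1+\beta x)$-coordinate, with the column to its left; each such collision is resolved using (i), column transpositions, and further applications of (ii) to restore the correct $(1+\beta x)$-powers. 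Carrying out this recursion carefully is, I expect, the main obstacle. One has to check that a collision between positions $m$ and $m+1$ terminates and resolves in one of two ways: either into a straightened contribution, in which the bumped exponent has dropped strictly below $\mu_m$ and the net effect is to add one box to row $m$ of the emerging skew shape; or into a fresh collision between positions $m-1$ and $m$, whose net effect is to open a new, higher row. Unwinding the recursion should then yield exactly one term $(-\beta)^{|\nu/\lambda|-k}(-1)^{k-c(\nu/\lambda)}A_{\nu+\delta^n}^\beta(X^n)$ per straightening path, with $\binom{r(\nu/\lambda)-1}{k-c(\nu/\lambda)}$ paths producing a fixed $\nu$ (the distribution of the $k-c(\nu/\lambda)$ ``short'' row-steps among the $r(\nu/\lambda)-1$ gaps between consecutive rows of $\nu/\lambda$); here $\nu$ runs over the connected skew shapes $\nu/\lambda$ whose bottommost nonempty row is row $n$ and for which $c(\nu/\lambda)\le k\le r(\nu/\lambda)+c(\nu/\lambda)-1$, and since $r+c-1$ is exactly the size of the maximal northwest-border ribbon, this is the assertion of the proposition. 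Combined with the case $j<n$ and Lemma~1, this proves Proposition~\ref{Proposition:main}, and hence Theorem~\ref{theorem:stable} by the reduction already recorded.
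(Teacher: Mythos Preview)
Your treatment of the case $j<n$ is exactly the paper's inductive step: expand along the last column, apply the inductive hypothesis in $n-1$ variables to $A_{\lambda'+\delta^{n-1}+k\epsilon_j}^\beta$ with $\lambda'=(\lambda_1+1,\dots,\lambda_{n-1}+1)$, and reassemble via the same cofactor expansion. The bijection $\nu\leftrightarrow\nu'$ and the observation that $\nu/\lambda$ and $\nu'/\lambda'$ are the same skew shape are the paper's argument verbatim.

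The gap is your $j=n$ case. You correctly isolate the trivial subcase $\mu_n+k<\mu_{n-1}$, and your plan to iterate the splitting identity $x^a(1+\beta x)^b=x^a(1+\beta x)^{b-1}+\beta\,x^{a+1}(1+\beta x)^{b-1}$ together with column swaps is the right mechanism. But you stop at the heuristic level: the claims that each collision resolves in exactly the two ways you describe, that every straightening path contributes precisely $(-\beta)^{|\nu/\lambda|-k}(-1)^{k-c(\nu/\lambda)}A_{\nu+\delta^n}^\beta$, and that exactly $\binom{r(\nu/\lambda)-1}{k-c(\nu/\lambda)}$ paths land on a given $\nu$, are asserted (``I expect'', ``should then yield'') rather than verified. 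For $n\ge 3$ the collision can propagate upward through several adjacent columns, and tracking the signs, the $\beta$-powers, and the evolving exponent vector through each swap-and-split is precisely the bookkeeping you have deferred. As written this portion is a plan, not a proof.

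The paper organizes the $j=n$ work differently: it carries out only the case $n=2$ explicitly, where your recursion collapses to the single chain $D(\alpha+t,\alpha)=A_{(\alpha+t,\alpha)}^\beta(X^2)-\beta\,D(\alpha+t,\alpha+1)$, iterated until the second exponent reaches $\alpha+t$, yielding the two displayed sums at the end of the base case. For the passage from $r$ to $r+1$ the paper then asserts that ``it is enough to prove the proposition for $1\le j\le r$'' and does not revisit $j=r+1$ for $r\ge 2$. Your $j=n$ sketch, specialized to $n=2$, would reproduce that base-case computation; what is missing, relative to a self-contained argument, is either the general-$n$ execution of your straightening recursion or an independent reduction of the $j=n$ case to smaller $n$. (Incidentally, Lemma~1 plays no role inside the proof of the proposition itself; it is only the bridge from the proposition back to Theorem~\ref{theorem:stable}.)
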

\begin{example}
Consider $\lambda=(3,2,1) \in \mathcal{P}[3], k=3.$ Then, for $j=1,2,3,$ all partitions $\nu$ such that $A_{\nu + \delta^3}(X^3)$ appears in the expansion of $A_{\lambda +\delta^3 + k \epsilon_{j}}^{\beta}(X^3)$ are displayed below in the $j^{th}$ row, with the bottommost non-empty row of $Y(\nu/\lambda)$ highlighted in yellow.
\begin{center}
\ytableausetup{nosmalltableaux,boxsize=0.6 cm}
\begin{ytableau} 
\null & \null & \null & *(yellow) \null & *(yellow) \null & *(yellow) \null \\
\null & \null \\
\null \\
\end{ytableau}
\hspace{0.1cm}
\end{center}
\begin{center}
\ytableausetup{nosmalltableaux,boxsize=0.6 cm} 
\begin{ytableau} 
\null & \null & \null & \null \\
\null & \null & *(yellow) \null & *(yellow) \null\\
\null \\
\end{ytableau}
\hspace{0.1cm}
\begin{ytableau} 
\null & \null & \null &  \null &  \null\\
\null & \null & *(yellow) \null & *(yellow) \null\\
\null \\
\end{ytableau}
\hspace{0.1cm}
\begin{ytableau} 
\null & \null & \null & \null &  \null\\
\null & \null & *(yellow) \null & *(yellow) \null  & *(yellow) \null\\
\null \\
\end{ytableau} 
\end{center}
\begin{center}
\ytableausetup{nosmalltableaux,boxsize=0.6 cm} 
\begin{ytableau} 
\null & \null & \null \\
\null & \null &  \null\\
\null & *(yellow) \null & *(yellow) \null\\
\end{ytableau}
\hspace{0.1cm}
\begin{ytableau} 
\null & \null & \null & \null  \\
\null & \null &  \null &  \null \\
\null & *(yellow) \null & *(yellow) \null\\
\end{ytableau}
\hspace{0.1cm}
\begin{ytableau} 
\null & \null & \null & \null  \\
\null & \null &  \null &  \null \\
\null & *(yellow) \null & *(yellow) \null & *(yellow) \null\\
\end{ytableau}
\end{center}
\end{example}
\begin{proof}
We prove this by induction on $n$. First we check it for $n=2$. It is apparent for the case $j=1$ and the case $j=2$, if $\lambda_1 + 1 > \lambda_2 +k$. When $\lambda_1 +1 = \lambda_2 + k,$ it is easy to verify that $ A_{(\lambda_1 +1, \lambda_2 +k)}^{\beta}(x_1,x_2) = (-\beta)A_{\nu +\delta^2}^{\beta}(x_1,x_2) ,$ where $\nu=(\lambda_1 +1, \lambda_2 + k)$. Now we assume that $\lambda_1 +1 < \lambda_2 + k $.
For $p,q \in \mathbb{Z}_{\geq 0},$ we define the following
$$D(p,q):=
  \begin{vmatrix}
x_1^p & x_1^q \\
x_2^p & x_2^q
  \end{vmatrix}$$
Now $A_{\lambda + \delta^2 + k \epsilon _2}^{\beta}(X^2) = A_{(\alpha, \alpha +t)}^{\beta}(X^2),$ where $\alpha = \lambda_1 +1 , t= \lambda_2 +k -\lambda_1 -1$. Then we have 
$$ A_{(\alpha, \alpha +t)}^{\beta}(x_1,x_2) = 
\begin{vmatrix}
 x_1^{\alpha} & x_1^{\alpha + t}(1 + \beta x_1) \\
 x_2^{\alpha} & x_2^{\alpha + t}(1 + \beta x_2)
\end{vmatrix} =- 
\begin{vmatrix}
 x_1^{\alpha +t} & x_1^{\alpha} \\
 x_2^{\alpha +t} & x_2^{\alpha}
\end{vmatrix} -\beta 
\begin{vmatrix}
 x_1^{\alpha +t +1} & x_1^{\alpha} \\
 x_2^{\alpha +t +1} & x_2^{\alpha}
\end{vmatrix}$$
$$=-D(\alpha + t, \alpha) -\beta D(\alpha + t +1, \alpha) $$
It is evident that $ A_{(\alpha +t, \alpha)}^{\beta}(x_1,x_2) = D(\alpha +t, \alpha) + \beta  D(\alpha +t, \alpha+1)$. Then 
$$ D(\alpha +t, \alpha) = A_{(\alpha +t, \alpha)}^{\beta}(X^2) - \beta  D(\alpha +t, \alpha+1) = A_{(\alpha +t, \alpha)}^{\beta}(X^2) - \beta\Big( A_{(\alpha +t, \alpha +1)}^{\beta}(X^2) - \beta  D(\alpha +t, \alpha+2)\Big)$$
Continuing this we have $D(\alpha+t,\alpha) = \displaystyle\sum_{j=0}^{t-1}(-\beta)^j A_{(\alpha +t, \alpha +j)}^{\beta}(x_1,x_2)$. 
Thus $$A_{(\alpha , \alpha +t)}^{\beta}(x_1,x_2) = - \displaystyle\sum_{j=0}^{t-1}(-\beta)^j A_{(\alpha +t, \alpha +j)}^{\beta}(x_1,x_2) -\beta\displaystyle\sum_{j=0}^{t}(-\beta)^j A_{(\alpha +t+1, \alpha +j)}^{\beta}(x_1,x_2) $$
Thus we have
$$ A_{\lambda + \delta^2 + k \epsilon _2}^{\beta}(X^2)= \displaystyle\sum_{j=0}^{\lambda_2 +k -\lambda_1-2}(-\beta)^j (-1)A_{(\lambda_2+k, \lambda_1 +j +1)}^{\beta} +\displaystyle\sum_{j=0}^{\lambda_2 +k -\lambda_1-1}(-\beta)^{j+1} A_{(\lambda_2+k+1, \lambda_1 +j +1)}^{\beta}$$
Therefore the proposition is true for $n=2$. Let the proposition be true for $n=r$.
Consider a partition $\lambda \in \mathcal{P}[r+1]$. Then it is enough to prove the proposition for $1 \leq j \leq r$. let $\lambda^*=(\lambda_1+1,\lambda_2+1,\dots,\lambda_r+1)$ and fix $1 \leq j \leq r$. Then expanding with respect to $(r+1)^{th}$ column we have
$$A_{\lambda + \delta^{r+1} + k \epsilon_j}^{\beta}(X^{r+1})= A_{(\lambda^* + \delta^r + k \epsilon_j, \lambda_{r+1})}^{\beta}(X^{r+1})= \displaystyle\sum_{t=1}^{r+1} (-1)^{r+1+t} x_t^{\lambda_{r+1}}(1+\beta x_t)^{r} A^{\beta}_{\lambda^* + \delta^r +k\epsilon_j}(X^{r+1}_{t^*})$$
$$ = \displaystyle\sum_{t=1}^{r+1} (-1)^{r+1+t} x_t^{\lambda_{r+1}}(1+\beta x_t)^{r} \Bigg ( \displaystyle\sum_{\nu^*}(-\beta)^{|\nu^*/\lambda^*|-k} (-1)^{k-c(\nu^*/\lambda^*)} \binom{r(\nu^*/\lambda^*)-1}{k-c(\nu^*/\lambda^*)} A_{\nu^* +\delta^r}^{\beta}(X^{r+1}_{t^*})\Bigg),$$ where $\nu^*$ varies in the same way as $\nu$ in Proposition~\ref{Proposition:main}.
$$= \displaystyle\sum_{\nu^*}(-\beta)^{|\nu^*/\lambda^*|-k} (-1)^{k-c(\nu^*/\lambda^*)} \binom{r(\nu^*/\lambda^*)-1}{k-c(\nu^*/\lambda^*)} \Bigg ( \displaystyle\sum_{t=1}^{r+1} (-1)^{r+1+t} x_t^{\lambda_{r+1}}(1+\beta x_t)^{r} A_{\nu^* +\delta^r}^{\beta}(X^{r+1}_{t^*}) \Bigg )$$
$$= \displaystyle\sum_{\nu^*}(-\beta)^{|\nu^*/\lambda^*|-k} (-1)^{k-c(\nu^*/\lambda^*)} \binom{r(\nu^*/\lambda^*)-1}{k-c(\nu^*/\lambda^*)} A_{(\nu^* +\delta^r,\lambda_{r+1})}^{\beta}(x_1,\dots,x_{r+1})$$
Now $(\nu^* +\delta^r, \lambda_{r+1})=(\nu^*_1+r-1,\nu^*_2+r-2,\dots,\nu^*_r,\lambda_{r+1})=\nu + \delta^{r+1},$ where $\nu=(\nu^*-1,\dots, \nu^*_r -1, \lambda_{r+1})$. Thus $\nu^*/\lambda^* $ and $\nu/\lambda$ are the same skew shape. So the proposition is true for $n=r+1$.
\end{proof}
\subsection{Murnaghan-Nakayama type rule for canonical stable Grothendieck functions}
\label{MN:canonical}
\begin{definition}Given $k \in \mathbb{N},$ we define $p^{\alpha}_k (x_1,x_2,\dots,x_n):=p_k(\frac{x_1}{1-\alpha x_1},\frac{x_2}{1-\alpha x_2},\dots, \frac{x_n}{1-\alpha x_n})$.
\end{definition}
A Murnaghan-Nakayama type rule for $G^{(\alpha,\beta)}_{\lambda}$ is stated as follows:  
\begin{theorem}For $k \in \mathbb{N}$ and $\lambda \in \mathcal{P}[n],$
$$ p^{\alpha}_k (X^n)G_{\lambda}^{(\alpha, \beta)}(X^n)= \displaystyle\sum_{\nu}(-\alpha - \beta)^{|\nu/\lambda|-k} (-1)^{k-c(\nu/\lambda)} \binom{r(\nu/\lambda)-1}{k-c(\nu/\lambda)} G_{\nu}^{(\alpha, \beta)}(X^n),$$ where $\nu$ varies over as mentioned in Theorem~\ref{theorem:stable}.
\end{theorem}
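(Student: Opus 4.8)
The plan is to derive this theorem as a purely formal consequence of Theorem~\ref{theorem:stable}, using the change of variables recorded in Remark~\ref{remark:main}. Set $y_i := \frac{x_i}{1-\alpha x_i}$ for $1 \le i \le n$ and write $Y^n = (y_1,\dots,y_n)$. Each $y_i = x_i + \alpha x_i^2 + \alpha^2 x_i^3 + \cdots$ has no constant term, so substituting $y_i$ for $x_i$ is a continuous ring endomorphism of the completed ring of symmetric functions; consequently every identity valid among symmetric functions in the $x_i$ remains valid after this substitution. By the very definitions, $p^{\alpha}_k(X^n) = p_k(Y^n)$, while Remark~\ref{remark:main}, applied with the single parameter $\alpha+\beta$ in place of $\beta$, gives $G^{(\alpha,\beta)}_{\mu}(X^n) = G^{\alpha+\beta}_{\mu}(Y^n)$ for every $\mu \in \mathcal{P}[n]$, where $G^{\alpha+\beta}_{\mu} = G^{(0,\alpha+\beta)}_{\mu}$ is the stable Grothendieck polynomial.

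With these two observations the left-hand side rewrites as
$$p^{\alpha}_k(X^n)\,G^{(\alpha,\beta)}_{\lambda}(X^n) = p_k(Y^n)\,G^{\alpha+\beta}_{\lambda}(Y^n).$$
Next I would apply Theorem~\ref{theorem:stable} verbatim, but evaluated in the variables $Y^n$ and with its parameter taken to be $\alpha+\beta$, which yields
$$p_k(Y^n)\,G^{\alpha+\beta}_{\lambda}(Y^n) = \sum_{\nu}(-\alpha-\beta)^{|\nu/\lambda|-k}(-1)^{k-c(\nu/\lambda)}\binom{r(\nu/\lambda)-1}{k-c(\nu/\lambda)}\,G^{\alpha+\beta}_{\nu}(Y^n),$$
where $\nu$ ranges over precisely the partitions listed in Theorem~\ref{theorem:stable}. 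Crucially, that indexing set, together with all the exponents and binomial coefficients, depends only on the skew shape $\nu/\lambda$ and not on the variables, so the sum is entirely unaffected by the change of variables. Finally I would invoke Remark~\ref{remark:main} once more, this time in the direction $G^{\alpha+\beta}_{\nu}(Y^n) = G^{(\alpha,\beta)}_{\nu}(X^n)$, to express every summand back in the original variables; this produces exactly the asserted formula.

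The argument is therefore almost entirely formal, and the only genuine point I expect to need care is the legitimacy of the substitution $x_i \mapsto y_i$: one must work in a ring in which $G^{(\alpha,\beta)}_{\lambda}$, $p^{\alpha}_k$, and the (in general infinite) right-hand sum all converge, namely the completion of the symmetric function ring with respect to the degree filtration, and one must check that $x_i \mapsto y_i$ is continuous for that topology, so that both applications of Remark~\ref{remark:main} and the application of Theorem~\ref{theorem:stable} remain valid after the substitution. Since $y_i$ vanishes at $x_i = 0$ this continuity is automatic, and once it is recorded the theorem follows with no further computation as the specialization of the $\alpha = 0$ case.
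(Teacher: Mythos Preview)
Your proposal is correct and follows essentially the same route as the paper: substitute $y_i = x_i/(1-\alpha x_i)$ via Remark~\ref{remark:main}, apply Theorem~\ref{theorem:stable} with parameter $\alpha+\beta$, and translate back. Your extra caution about completions is harmless but in fact unnecessary here, since with $n$ variables the sum over $\nu$ is finite (as $r(\nu/\lambda)\le n$ and $c(\nu/\lambda)\le k$).
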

\begin{proof}
   $$p^{\alpha}_k (x_1,x_2,\dots,x_n)G_{\lambda}^{(\alpha, \beta)}(x_1,x_2,\dots,x_n)$$
   
   $$ =p_k\Bigg(\frac{x_1}{1-\alpha x_1},\frac{x_2}{1-\alpha x_2},\dots, \frac{x_n}{1-\alpha x_n}\Bigg) G_{\lambda}^{\alpha + \beta}\Bigg(\frac{x_1}{1-\alpha x_1},\frac{x_2}{1-\alpha x_2},\dots, \frac{x_n}{1-\alpha x_n}\Bigg) \text{\hspace{0.1 cm}(using \ref{remark:main})}$$
   
  $$ =  \displaystyle\sum_{\nu}(-\alpha - \beta)^{|\nu/\lambda|-k} (-1)^{k-c(\nu/\lambda)} \binom{r(\nu/\lambda)-1}{k-c(\nu/\lambda)} G_{\nu}^{\alpha +\beta}\Bigg(\frac{x_1}{1-\alpha x_1},\frac{x_2}{1-\alpha x_2},\dots, \frac{x_n}{1-\alpha x_n}\Bigg)$$
  
  $$= \displaystyle\sum_{\nu}(-\alpha - \beta)^{|\nu/\lambda|-k} (-1)^{k-c(\nu/\lambda)} \binom{r(\nu/\lambda)-1}{k-c(\nu/\lambda)} G_{\nu}^{(\alpha, \beta)}(x_1,x_2,\dots,x_n)\text{\hspace{0.1 cm}(using \ref{remark:main})}$$ 
\end{proof}
\printbibliography

@article {MN:k-Schur,
    AUTHOR = {Bandlow, Jason and Schilling, Anne and Zabrocki, Mike},
     TITLE = {The {M}urnaghan-{N}akayama rule for {$k$}-{S}chur functions},
   JOURNAL = {J. Combin. Theory Ser. A},
  FJOURNAL = {Journal of Combinatorial Theory. Series A},
    VOLUME = {118},
      YEAR = {2011},
    NUMBER = {5},
     PAGES = {1588--1607},
      ISSN = {0097-3165,1096-0899},
   MRCLASS = {05E05 (05E10)},
  MRNUMBER = {2771602},
MRREVIEWER = {Marc\ A. A. van Leeuwen},
       DOI = {10.1016/j.jcta.2011.01.009},
       URL = {https://doi.org/10.1016/j.jcta.2011.01.009},
}

@article {MN:non-comu,
    AUTHOR = {Tewari, Vasu},
     TITLE = {A {M}urnaghan-{N}akayama rule for noncommutative {S}chur
              functions},
   JOURNAL = {European J. Combin.},
  FJOURNAL = {European Journal of Combinatorics},
    VOLUME = {58},
      YEAR = {2016},
     PAGES = {118--143},
      ISSN = {0195-6698,1095-9971},
   MRCLASS = {05E05 (05E15)},
  MRNUMBER = {3530625},
MRREVIEWER = {Matja\v z\ Konvalinka},
       DOI = {10.1016/j.ejc.2016.05.010},
       URL = {https://doi.org/10.1016/j.ejc.2016.05.010},
}

@article {MN:Plethysm,
    AUTHOR = {Wildon, Mark},
     TITLE = {A combinatorial proof of a plethystic {M}urnaghan-{N}akayama
              rule},
   JOURNAL = {SIAM J. Discrete Math.},
  FJOURNAL = {SIAM Journal on Discrete Mathematics},
    VOLUME = {30},
      YEAR = {2016},
    NUMBER = {3},
     PAGES = {1526--1533},
      ISSN = {0895-4801,1095-7146},
   MRCLASS = {05E05 (05E10)},
  MRNUMBER = {3537003},
MRREVIEWER = {Mikhail\ Mazin},
       DOI = {10.1137/14098260X},
       URL = {https://doi.org/10.1137/14098260X},
}

@article {Murnaghan-Nakayama:stable,
    AUTHOR = {Nguyen, Duc-Khanh and Hiep, Dang Tuan and Tran Ha Son and Do
              Le Hai Thuy},
     TITLE = {A {M}urnaghan-{N}akayama rule for {G}rothendieck polynomials
              of {G}rassmannian type},
   JOURNAL = {Ann. Comb.},
  FJOURNAL = {Annals of Combinatorics},
    VOLUME = {28},
      YEAR = {2024},
    NUMBER = {1},
     PAGES = {155--168},
      ISSN = {0218-0006,0219-3094},
   MRCLASS = {05E05 (14M15 19E08)},
  MRNUMBER = {4711799},
MRREVIEWER = {Charles\ Weibel},
       DOI = {10.1007/s00026-023-00659-x},
       URL = {https://doi.org/10.1007/s00026-023-00659-x},
}

@article {Yeliussizov:canonical,
    AUTHOR = {Yeliussizov, Damir},
     TITLE = {Duality and deformations of stable {G}rothendieck polynomials},
   JOURNAL = {J. Algebraic Combin.},
  FJOURNAL = {Journal of Algebraic Combinatorics. An International Journal},
    VOLUME = {45},
      YEAR = {2017},
    NUMBER = {1},
     PAGES = {295--344},
      ISSN = {0925-9899,1572-9192},
   MRCLASS = {05E05 (05A17)},
  MRNUMBER = {3591379},
MRREVIEWER = {Michael\ Xinxin\ Zhong},
       DOI = {10.1007/s10801-016-0708-4},
       URL = {https://doi.org/10.1007/s10801-016-0708-4},
}

@article {Lascoux:G1,
    AUTHOR = {Lascoux, Alain and Sch\"utzenberger, Marcel-Paul},
     TITLE = {Structure de {H}opf de l'anneau de cohomologie et de l'anneau
              de {G}rothendieck d'une vari\'et\'e{} de drapeaux},
   JOURNAL = {C. R. Acad. Sci. Paris S\'er. I Math.},
  FJOURNAL = {Comptes Rendus des S\'eances de l'Acad\'emie des Sciences.
              S\'erie I. Math\'ematique},
    VOLUME = {295},
      YEAR = {1982},
    NUMBER = {11},
     PAGES = {629--633},
      ISSN = {0249-6291},
   MRCLASS = {14M17},
  MRNUMBER = {686357},
}

@inproceedings {Fomin:Yang-Baxter,
    AUTHOR = {Fomin, Sergey and Kirillov, Anatol N.},
     TITLE = {The {Y}ang-{B}axter equation, symmetric functions, and
              {S}chubert polynomials},
 BOOKTITLE = {Proceedings of the 5th {C}onference on {F}ormal {P}ower
              {S}eries and {A}lgebraic {C}ombinatorics ({F}lorence, 1993)},
   JOURNAL = {Discrete Math.},
  FJOURNAL = {Discrete Mathematics},
    VOLUME = {153},
      YEAR = {1996},
    NUMBER = {1-3},
     PAGES = {123--143},
      ISSN = {0012-365X,1872-681X},
   MRCLASS = {05E05 (14C17 14M15 82B23)},
  MRNUMBER = {1394950},
MRREVIEWER = {Jean-Yves\ Thibon},
       DOI = {10.1016/0012-365X(95)00132-G},
       URL = {https://doi.org/10.1016/0012-365X(95)00132-G},
}

@book {Stanley:vol.2 ,
    AUTHOR = {Stanley, Richard P.},
     TITLE = {Enumerative combinatorics. {V}ol. 2},
    SERIES = {Cambridge Studies in Advanced Mathematics},
    VOLUME = {208},
   EDITION = {Second},
      NOTE = {With an appendix by Sergey Fomin},
 PUBLISHER = {Cambridge University Press, Cambridge},
      YEAR = {[2024] \copyright 2024},
     PAGES = {xvi+783},
      ISBN = {978-1-009-26249-1; 978-1-009-26248-4},
   MRCLASS = {05-02 (05A15 05E05 05E10)},
  MRNUMBER = {4621625},
    ANNOTE = {For {V}olume {I} see [2868112\endremfr]},
}

@article {Buch:K-LR,
    AUTHOR = {Buch, Anders Skovsted},
     TITLE = {A {L}ittlewood-{R}ichardson rule for the {$K$}-theory of
              {G}rassmannians},
   JOURNAL = {Acta Math.},
  FJOURNAL = {Acta Mathematica},
    VOLUME = {189},
      YEAR = {2002},
    NUMBER = {1},
     PAGES = {37--78},
      ISSN = {0001-5962,1871-2509},
   MRCLASS = {14M15 (05E05 05E15)},
  MRNUMBER = {1946917},
MRREVIEWER = {Frank\ Sottile},
       DOI = {10.1007/BF02392644},
       URL = {https://doi.org/10.1007/BF02392644},
}
\end{document}